\newcolumntype{^}{>{\currentrowstyle}}
\journal{arXiv}
\newtheorem{lemma}{Lemma}
\newtheorem{theorem}{Theorem}
\newtheorem{corollary}{Corollary}
\begin{document}
\renewcommand{\abstractname}{Abstract}
\renewcommand{\refname}{References}
\renewcommand{\tablename}{Figure.}
\renewcommand{\arraystretch}{0.9}
\thispagestyle{empty}
\sloppy

\begin{frontmatter}
\title{Spectrum of the Transposition graph}

\author[03,01,02]{Elena~V.~Konstantinova}
\ead{e\_konsta@math.nsc.ru}

\author[01,02]{Artem Kravchuk}
\ead{artemkravchuk13@gmail.com}

\address[03] {Three Gorges Mathematical Research Center, China Three Gorges University, 8 University Avenue, Yichang 443002, Hubei Province, China}

\address[01]{Sobolev Institute of Mathematics, Ak. Koptyug av. 4, Novosibirsk 630090, Russia}
\address[02]{Novosibirsk State University, Pirogova str. 2, Novosibirsk, 630090, Russia}

\begin{abstract}
Transposition graph $T_n$ is defined as a Cayley graph over the symmetric group generated by all transpositions. It is known that all eigenvalues of $T_n$ are integers. However, an explicit description of the spectrum is unknown. In this paper we prove that for any integer $k\geqslant 0$ there exists $n_0$ such that for any $n\geqslant n_0$ and any $m \in \{0, \dots, k\}$, $m$ is an eigenvalue of $T_n$. In particular, it is proved that zero is an eigenvalue of $T_n$ for any $n\neq2$, and one is an eigenvalue of $T_n$ 
for any odd $n\geqslant 7$ and for any even $n \geqslant 14$. We also present exact values of the third and the fourth largest eigenvalues of $T_n$ with their multiplicities. 
\end{abstract}

\begin{keyword}
Transposition graph; integral graph; spectrum; 
\vspace{\baselineskip}
\MSC[2010] 05C25\sep 05E10\sep 05E15
\end{keyword}
\end{frontmatter}

\section{Introduction}\label{sec1}

The {\it Transposition graph} $T_n$ is defined as a Cayley graph over the symmetric group $\mathrm{Sym}_n$ generated by all transpositions. The graph $T_n, n\geqslant 2$, is a connected bipartite $\binom{n}{2}$-regular graph of order $n!$ and diameter $(n-1)$~\cite{K08}. It is an edge--transitive graph but not distance--regular, and hence not distance--transitive graph. It was shown in~\cite{KL20} that the Transposition graph is integral which means that all eigenvalues of its adjacency matrix are integers~\cite{HS74}. Since $T_n$ is bipartite then its spectrum $Spec(T_n)$ is symmetric with respect to zero, where the spectrum of a graph is defined as a multiset of distinct eigenvalues together with their multiplicities~\cite{BH12}. Independently, an integerness of $T_n$ was shown in~\cite{KY97} along with finding the bisection width of the Transposition network $T_n$. More precisely, the following theorem was proved.

\begin{theorem}\label{KY-08} {\rm \cite[Lemma~3]{KY97}} The Transposition graph $T_n, n\geqslant 2,$ is an integral graph such that its largest eigenvalue is $\frac{n(n-1)}{2}$ with multiplicity $1$; its second largest eigenvalue is $\frac{n(n-3)}{2}$ with multiplicity $(n-1)^2$;  and for any $k, 3\leqslant k \leqslant n$, the value $\frac{n(n-2k+1)}{2}$ is an eigenvalue of $T_n$ with multiplicity at least $\frac{n!}{n(n-k)!(k-i)!}$.
\end{theorem}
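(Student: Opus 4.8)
The plan is to diagonalise the adjacency matrix of $T_n$ by means of the representation theory of $\mathrm{Sym}_n$, exploiting the fact that the connection set $S$ of all transpositions is a single conjugacy class, hence invariant under conjugation. For any finite group $G$ with a conjugation-invariant connection set, the adjacency operator of $\mathrm{Cay}(G,S)$ acts as a scalar on each isotypic component of the regular representation. Concretely, indexing the irreducible representations of $\mathrm{Sym}_n$ by partitions $\lambda\vdash n$, with character $\chi_\lambda$ and dimension $f^\lambda=\chi_\lambda(e)$, the eigenvalue attached to $\lambda$ is
\[
\eta_\lambda=\frac{1}{f^\lambda}\sum_{s\in S}\chi_\lambda(s)=\frac{\binom{n}{2}\,\chi_\lambda(\tau)}{f^\lambda},
\]
where $\tau$ is any fixed transposition (all elements of $S$ being conjugate), and this eigenvalue occurs in $\Spec(T_n)$ with multiplicity $(f^\lambda)^2$. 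First I would record this decomposition and reduce the whole problem to evaluating the normalised character $\eta_\lambda$ on the diagrams of interest.

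The second step is to make $\eta_\lambda$ explicit. By the classical Frobenius formula for the normalised character value at a transposition --- equivalently, by recognising $\sum_{s\in S}s$ as the sum of the Jucys--Murphy elements, which is central and so acts on $\lambda$ by a scalar --- one obtains
\[
\eta_\lambda=\sum_{(i,j)\in\lambda}(j-i),
\]
the sum of the contents of the cells of the Young diagram of $\lambda$. Since every content is an integer, this already establishes the integrality of $T_n$. I would then simply specialise. For the one-row partition $\lambda=(n)$ the contents are $0,1,\dots,n-1$, giving $\eta_{(n)}=\binom{n}{2}=\tfrac{n(n-1)}{2}$ with $f^{(n)}=1$, hence multiplicity $1$. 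For $\lambda=(n-1,1)$ a single cell of content $-1$ sits below the first row, so $\eta_{(n-1,1)}=\binom{n-1}{2}-1=\tfrac{n(n-3)}{2}$ with $f^{(n-1,1)}=n-1$, hence multiplicity $(n-1)^2$. More generally, for the hook $\lambda=(n-k+1,1^{k-1})$ the content sum is $\binom{n-k+1}{2}-\binom{k}{2}=\tfrac{n(n-2k+1)}{2}$, while the number of standard tableaux of a hook gives $f^\lambda=\binom{n-1}{k-1}=\tfrac{n!}{n(n-k)!(k-1)!}$; the contribution of this single irreducible to the multiplicity is $(f^\lambda)^2$, which is at least the claimed value $\binom{n-1}{k-1}$.

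The remaining, and least routine, ingredient is the identification of the first two values as the \emph{largest} and \emph{second largest} eigenvalues, which requires an extremal ordering argument on content sums rather than a mere evaluation. I would show that among all $\lambda\vdash n$ the content sum is maximised uniquely by $(n)$, and that among the remaining partitions it is maximised by $(n-1,1)$. The hard part will be this box-pushing step: moving a cell from the end of a row to a strictly lower row decreases its content by the drop in row index, so any deviation from the single row costs a positive amount, and the cheapest single deviation --- placing one box into the second row --- costs exactly $n$, giving the gap $\tfrac{n(n-1)}{2}-\tfrac{n(n-3)}{2}=n$ between the top two eigenvalues and certifying that no other partition can lie strictly between them. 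Carrying out this comparison carefully (for instance by a majorization argument on the row lengths, checking that $(n-2,2)$, $(n-2,1,1)$ and all further competitors fall strictly below $(n-1,1)$) is what pins down the multiplicities $1$ and $(n-1)^2$ exactly, whereas for the hook family only the weaker ``at least'' statement is needed and follows immediately from the value and dimension computed above.
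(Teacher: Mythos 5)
Your proposal is correct and is essentially the same approach as the paper's: the paper does not reprove this statement itself (it is quoted from~\cite{KY97}), but the machinery it assembles in Section~\ref{Sec2} --- the normal Cayley graph eigenvalue formula of Theorem~\ref{Z-88} and the expression~(\ref{transp_eigen}), which is exactly your content sum $\sum_{(i,j)\in\lambda}(j-i)$ written row by row --- is precisely what you use, and your dominance/box-pushing comparison for pinning down the top two eigenvalues is the same style of ordering argument the paper itself carries out for the third and fourth largest eigenvalues in Theorems~\ref{3} and~\ref{4}. Your evaluations at $(n)$, $(n-1,1)$ and the hooks $(n-k+1,1^{k-1})$, and the strict decrease of the content sum when a box is moved to a lower row, all check out, so the proof is sound.
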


This theorem, among other things, gives an idea on how the spectrum of the Transposition graph looks like. However, an explicit description of the spectrum is unknown. The next theorem gives an arrangement of eigenvalues around zero in the spectrum of this graph. 

\begin{theorem} \label{10}
For any integer $k\geqslant 0$, there exists $n_0$ such that for any $n\geqslant n_0$ and any $m \in \{0, \dots, k\}$, $m \in Spec(T_n)$. \end{theorem}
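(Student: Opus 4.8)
The plan is to translate the problem into the combinatorics of Young diagrams. Since the connection set of $T_n$ is the full conjugacy class of transpositions, the corresponding class sum is central in $\mathbb{C}[\mathrm{Sym}_n]$ and therefore acts on each irreducible module $S^\lambda$, $\lambda\vdash n$, as a scalar; by the classical Jucys--Murphy (Frobenius) formula this scalar equals the content sum $\eta_\lambda:=\sum_{(i,j)\in\lambda}(j-i)$. Consequently the distinct eigenvalues of $T_n$ are exactly the numbers $\eta_\lambda$ over $\lambda\vdash n$, with multiplicity $\sum_{\eta_\lambda=m}(f^\lambda)^2$, where $f^\lambda=\dim S^\lambda$. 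This is consistent with Theorem~\ref{KY-08}: the hook $\lambda=(n-k+1,1^{\,k-1})$ has $\eta_\lambda=\tfrac{n(n-2k+1)}{2}$. Thus Theorem~\ref{10} reduces to the purely combinatorial claim that for every fixed $k$ there is $n_0$ such that for all $n\ge n_0$ and all $m\in\{0,\dots,k\}$ some $\lambda\vdash n$ satisfies $\eta_\lambda=m$; the symmetric statement for $-m$ is then automatic from $\eta_{\lambda'}=-\eta_\lambda$.

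First I would pass to Frobenius coordinates $\lambda=(a_1>\dots>a_d\mid b_1>\dots>b_d)$, where $d$ is the Durfee square size, $a_k=\lambda_k-k$ and $b_k=\lambda'_k-k$. Summing contents hook by hook gives the two identities I will use, $|\lambda|=d+\sum_{k=1}^d(a_k+b_k)$ and $\eta_\lambda=\sum_{k=1}^d\bigl(\binom{a_k+1}{2}-\binom{b_k+1}{2}\bigr)$. Two features make these coordinates ideal for the construction: a \emph{balanced} principal hook, $a_k=b_k$, contributes $0$ to $\eta_\lambda$ while contributing $1+2a_k$ to $|\lambda|$, and the single unbalanced hook $(a_k,b_k)=(m,m-1)$ contributes exactly $\binom{m+1}{2}-\binom{m}{2}=m$ to $\eta_\lambda$.

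The construction is then transparent. For $1\le m\le k$ I take one final hook $(m,m-1)$ together with $d-1$ balanced hooks of distinct sizes $c_1>\dots>c_{d-1}>m$; this is a legal Frobenius array, it has $\eta_\lambda=m$, and $|\lambda|=d+2\sum_{k<d}c_k+(2m-1)$. For fixed $d$ the sum $\sum_{k<d}c_k$ can be set to any sufficiently large integer by increasing $c_1$, so $|\lambda|$ sweeps out every large enough $n$ with $n\equiv d-1\pmod 2$. Choosing $d=2$ for odd $n$ and $d=3$ for even $n$ covers both parities, and a direct estimate of the minimal admissible size gives explicit thresholds, namely all odd $n\ge 4m+3$ and all even $n\ge 6m+8$. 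For $m=0$ I instead take all $d$ hooks balanced, so that $\eta_\lambda=0$ and $|\lambda|=d+2\sum_k c_k$, and pick $d\in\{1,2\}$ according to the parity of $n$. Taking $n_0=6k+8$ then makes every $m\in\{0,\dots,k\}$ achievable for all $n\ge n_0$.

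The main obstacle is not the existence of a content-$m$ gadget, which is immediate above, but the uniform parity bookkeeping needed to realise every sufficiently large $n$ rather than only an arithmetic progression: enlarging a balanced hook changes $|\lambda|$ by an even number, so a fixed Durfee size reaches only one parity class, and matching parities is exactly what forces the threshold to depend on $m$. This is also the source of the even/odd split recorded in the abstract; indeed the bound above already predicts that $m=1$ first appears at $n=7$ for odd $n$ and at $n=14$ for even $n$, matching the stated values. The only other point to secure is the opening reduction: if the content-sum description of $\mathrm{Spec}(T_n)$ is not yet available in the paper, it must be established first, after which the argument is elementary.
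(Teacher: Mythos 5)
Your proof is correct, and it arrives at the paper's construction by a genuinely different route. The opening reduction costs nothing beyond what the paper already cites: your content sum $\eta_\lambda=\sum_{(i,j)\in\lambda}(j-i)$ is exactly the paper's expression~(\ref{transp_eigen}), since row $j$ contributes $\sum_{i=1}^{n_j}(i-j)=\frac{n_j(n_j-2j+1)}{2}$, so the Jucys--Murphy appeal can be replaced by this one-line identity together with Theorem~\ref{Z-88} and formula~(\ref{Formula3-KY}). Where the paper (Lemmas~\ref{lemma3} and~\ref{lemma4}) writes down explicit families --- $\left(\frac{n-2\lambda+1}{2},\lambda+2,2\times(\lambda-1),1\times\frac{n-4\lambda-1}{2}\right)$ for odd $n$ and a five-block family for even $n$ --- and verifies them by direct substitution, you \emph{derive} such families from Frobenius coordinates: balanced principal hooks as content-zero padding plus a single $(m,m-1)$ hook carrying the eigenvalue, with the Durfee size $d\in\{2,3\}$ controlling the parity of $n$. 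In fact your $d=2$ gadget is precisely the paper's odd family, and the paper's even family is your $d=3$ gadget with the suboptimal choice $c_2=2\lambda$; exploiting the freedom to take $c_2=m+1$ is exactly what improves the even-$n$ threshold to $n\geqslant 6m+8$ (hence $n_0=6k+8$) against the paper's $n\geqslant 10\lambda+4$ (hence $n_0=10k+4$), the two agreeing at $m=1$, $n=14$. Your bookkeeping also explains structurally why the odd/even split occurs (a fixed Durfee size reaches only one parity class), which the paper records but does not explain; the paper's version buys brevity, needing only formula~(\ref{transp_eigen}) and two substitutions. The legality checks in your construction are in order ($c_{d-1}>m$ gives strictly decreasing arm and leg sequences, and $b_d=m-1\geqslant 0$ needs $m\geqslant 1$, with $m=0$ handled separately by all-balanced hooks, consistent with Lemma~\ref{lemma1} and the exclusion of $n=2$), as is the sweeping argument that the balanced-hook sizes realize every sufficiently large integer, so the proposal is complete.
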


Thus, for large enough $n$, this result shows an existence of all integers in  $Spec(T_n)$ up to some upper bound. Moreover, since $T_n$ is bipartite hence $-m \in Spec(T_n)$. To prove this theorem we use basic facts from the representation theory of the symmetric group. We also prove new results on a correspondence between eigenvalues of the graph $T_n$ and partitions of $n$. These technical results are presented in Section~\ref{Sec2} along with definitions and notation. In particular, it is proved that zero is an eigenvalue of $T_n$ for any $n\neq2$, and one is an eigenvalue of $T_n$ for any odd $n\geqslant 7$ and for any even $n \geqslant 14$. Then we prove Theorem~\ref{10} in Section~\ref{Sec3}. Finally, in Section~\ref{Sec4} we estimate exact values of the third and the fourth largest eigenvalues of the graph $T_n$ and present their multiplicities.

\section{Preliminaries}\label{Sec2}

\subsection{Basic facts}

Let $G$ be a finite group with an identity element $1_G$, and $S$ be its generating set. Then a Cayley graph $\Gamma=Cay(G,S)$ is called {\it normal} if its generating set $S$ is closed under conjugation, i.~e. $S$ is the union of conjugacy classes of $G$.  The following theorem enables to compute eigenvalues and their multiplicities for any normal Cayley graph $\Gamma$ in terms of complex character values of $G$.  

\begin{theorem}\label{Z-88} {\rm \cite[Theorem~1]{Z88}} Let $G$ be a finite group with $s$ conjugacy classes and let $\{\chi_1,\chi_2,\ldots \chi_s \}$ be the set of all irreducible complex characters of $G$. Then the eigenvalues $\lambda_i, \ i=1,2,\ldots,s$, of any normal Cayley graph $\Gamma=Cay(G,S)$ are given by the following expression:

\begin{equation}\label{eigen_expr}
\lambda_i=\sum_{g\in S} \frac{\chi_i(g)}{\chi_i(1_G)},  
\end{equation}
and the multiplicity $mul(\lambda_i)$ of $\lambda_i$ is given by the formula:

\begin{equation}\label{mul_expr}
mul(\lambda_i)=\sum_{j=1,\\ \newline \lambda_j=\lambda_i}^{s} \chi_j(1_G)^2,
\end{equation}
\end{theorem}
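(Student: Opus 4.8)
The plan is to realize the adjacency operator of $\Gamma = Cay(G,S)$ as a single central element of the group algebra $\mathbb{C}[G]$ acting through the regular representation, and then to read off both its eigenvalues and their multiplicities from the way the regular representation decomposes into irreducibles. First I would identify the space $\mathbb{C}^G$ on which the adjacency matrix $A$ acts with the group algebra $\mathbb{C}[G]$ via the standard basis $\{e_g : g \in G\}$. Writing $R$ for the right regular representation, $R(t)e_h = e_{ht^{-1}}$, one checks that $A = \sum_{t \in S} R(t)$: the $(g,h)$ entry of this sum counts the $t \in S$ with $ht^{-1} = g$, i.e. $t = g^{-1}h$, which is exactly the adjacency rule $g \sim h \iff g^{-1}h \in S$. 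Thus $A = R(a)$, where $a = \sum_{t\in S} t \in \mathbb{C}[G]$.

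The decisive use of \emph{normality} comes next. Because $S$ is a union of conjugacy classes, the element $a$ is a sum of class sums and hence lies in the center of $\mathbb{C}[G]$. Therefore, in each irreducible representation $\rho_i$ of dimension $d_i = \chi_i(1_G)$, Schur's lemma forces $\rho_i(a) = \lambda_i I_{d_i}$ for some scalar $\lambda_i$. Taking traces gives $\lambda_i d_i = \mathrm{tr}\,\rho_i(a) = \sum_{t\in S}\chi_i(t)$, and dividing by $d_i = \chi_i(1_G)$ yields formula (\ref{eigen_expr}). This already identifies the candidate eigenvalues, one for each irreducible character.

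To obtain the multiplicities and to verify that no eigenvalue is missed, I would invoke the decomposition of the regular representation, $R \cong \bigoplus_i d_i\,\rho_i$, in which each irreducible $\rho_i$ occurs with multiplicity equal to its own dimension $d_i$. Consequently the isotypic component attached to $\rho_i$ has dimension $d_i^2$, and on all of it $A = R(a)$ acts as the single scalar $\lambda_i$. Summing the contributions of every block whose scalar equals a prescribed value $\lambda$ gives $mul(\lambda) = \sum_{j:\,\lambda_j = \lambda}\chi_j(1_G)^2$, which is (\ref{mul_expr}). Completeness is then automatic, since the multiplicities total $\sum_i d_i^2 = |G| = \dim \mathbb{C}[G]$, matching the order of $A$.

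The step requiring the most care is precisely this bookkeeping of multiplicity against dimension: a central element acts as a \emph{scalar} on each $d_i$-dimensional irreducible, yet in the regular representation that irreducible appears $d_i$ times, so the corresponding eigenspace of $A$ has dimension $d_i^2$, not $d_i$. Conflating these two numbers is the natural trap, and keeping them distinct is the crux of the argument. A secondary technical point is that $A$ must be symmetric for the real spectral decomposition into the $\lambda_i$ to be valid; this needs $S = S^{-1}$, which for the transposition generating set is immediate since every transposition is an involution, and which in the general normal setting should be recorded as a hypothesis.
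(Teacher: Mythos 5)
Your proof is correct, but note the point of comparison: the paper does not prove this statement at all --- it is imported verbatim as \cite[Theorem~1]{Z88}, so there is no ``paper's proof'' to match against. Your argument is the standard one from the literature (it is essentially Zieschang's, and also appears in Babai's and Diaconis--Shahshahani's work on eigenvalues of normal Cayley graphs): realize $A$ as $R(a)$ for the central element $a=\sum_{t\in S}t$ of $\mathbb{C}[G]$, apply Schur's lemma in each irreducible to get $\rho_i(a)=\lambda_i I_{d_i}$ with $\lambda_i d_i=\sum_{t\in S}\chi_i(t)$, and then use $R\cong\bigoplus_i d_i\,\rho_i$ to see that $A$ acts as the scalar $\lambda_i$ on the $d_i^2$-dimensional isotypic component, with completeness checked by $\sum_i d_i^2=|G|$. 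All steps are sound, and you correctly flag the one trap --- distinguishing the dimension $d_i$ of the irreducible from the dimension $d_i^2$ of its isotypic block. Your closing caveat is also well taken and worth keeping: the theorem as quoted tacitly assumes an undirected graph, i.e.\ $S=S^{-1}$ (and $1_G\notin S$); without it the same argument still diagonalizes $A$ with eigenvalues $\lambda_i$ of multiplicity $d_i^2$, but the $\lambda_i$ may be complex, whereas for the transposition generating set of $T_n$ the hypothesis is automatic since every generator is an involution. In short: the paper buys the result by citation, while you supply a self-contained proof, and yours is the canonical one.
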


It was shown in~\cite{KY97} that the conditions of Theorem~\ref{Z-88} hold for the Transposition graph $T_n=Cay(\mathrm{Sym}_n,T)$, where $T$ is the set of all transpositions. Moreover, the following useful expressions were obtained.

It is well-known fact (see~\cite{Sa01}) that there is one-to-one correspondence between the irreducible complex characters $\chi_i(g)$ and $\chi_i(1_G)$, $i=1,2,\ldots,p(n)$, of the symmetric group $\mathrm{Sym}_n$ and the partitions of $n$, where $p(n)$ is the number of partitions of $n$. Let a nonincreasing sequence $(n_1,n_2,\ldots,n_k), \ k\geqslant 1$, where $\sum_{j=1}^k n_j=n$, be the partition ${\bf i}=(n_1,\dots,n_k)\vdash n$ of $n$ corresponding to an irreducible complex character $\chi_i$. Then the following expression holds:
\begin{equation}\label{Formula3-KY}
\frac{\chi_i(\tau)}{\chi_i(I_n)}=\sum_{j=1}^k \frac{n_j(n_j-2j+1)}{n(n-1)}, 
\end{equation}
where $\tau$ is a transposition and $I_n$ is the identity permutation. Since the generating set $T$ of $T_n$ consists of $\frac{n(n-1)}{2}$ transpositions, then equations~(\ref{eigen_expr}) and~(\ref{Formula3-KY}) give an expression for an eigenvalue $\lambda_{\bf i}$ corresponding to the partition ${\bf i}$:

\begin{equation}\label{transp_eigen}
    \lambda_{\bf i} = \sum_{j=1}^k \frac{n_j(n_j-2j+1)}{2}. 
\end{equation}

Moreover, the last expression is bounded as follows:

\begin{equation}\label{eigen_ineq}
\sum_{j=1}^k \frac{n_j(n_j-2j+1)}{2} \leqslant \frac{(n-n_k)(n-n_k+1)}{2}+\frac{n_k(n_k-2k+1)}{2}.
\end{equation}

By Theorem~\ref{Z-88}, to compute multiplicities of eigenvalues of the Transposition graph $T_n$ we have to be able to compute $\chi_i(I_n)$. It is known (see~\cite{Sa01} for more details), this can be determined using a standard Young tableau associated with the partition of $n$ and the Frame-Robinson-Thrall hook-length formula defined as follows:

\begin{equation}\label{hook_formula}
\chi_i(I_n)=\frac{n!}{\prod_{t=1}^{k}\prod_{j=1}^{n_t} h_{tj}}, 
\end{equation}
where $h_{tj}$ is the hook-length of a box $(t,j)$ in a Young diagram. 

\subsection{New technical results}

We start with showing that the eigenvalue zero is in the spectrum of the graph $T_n$ for any $n\neq 2$. In what follows below, we use notation $(n_1,\dots,n_k,1\times t)$ for a partition in which $1$ appears $t$ times, where $t \geqslant 0$.

\begin{lemma}\label{lemma1}
For any odd $n\geqslant 1$, the partition $\left(\frac{n+1}{2}, 1\times\frac{n-1}{2}\right)$ corresponds to the eigenvalue zero of the Transposition graph $T_n$. For any even $n\geqslant 4$, the partition $\left(\frac{n}{2}, 2, 1\times \frac{n-4}{2}\right)$ corresponds to the eigenvalue zero of $T_n$.
\end{lemma}

\begin{proof} We prove the lemma by a direct substitution of partitions into the expression~(\ref{transp_eigen}). Indeed, if $n$ is odd then we have:
$$\lambda_{\left(\frac{n+1}{2},1\times\frac{n-1}{2}\right)}=\frac{1}{2}\left(\frac{n+1}{2}\cdot \left(\frac{n+1}{2}-2+1\right) \right) + \frac{1}{2}\sum\limits_{j=2}^{\frac{n-1}{2}+1}(1-2j+1)=$$ $$\frac{n^2-1}{8}-\frac{n^2-1}{8}=0,$$
and if $n$ is even then we have:
$$\lambda_{\left(\frac{n}{2},2,1\times\frac{n-4}{2}\right)}=\frac{1}{2}\cdot\frac{n}{2}\cdot\left(\frac{n}{2}-2+1\right)+\frac{1}{2}\cdot2\cdot(2-4+1)+\frac{1}{2}\sum\limits_{j=3}^{\frac{n-4}{2}+2}(1-2j+1)=$$
$$=\frac{n\cdot(n-2)}{8}-1-\frac{(n+2)\cdot(n-4)}{8}=0.$$

Note that partition $\left(\frac{n+1}{2},1\times\frac{n-1}{2}\right)$ holds for any odd $n\geqslant 1$, and the partition
$(\frac{n}{2},2,1\times \frac{n-4}{2})$ holds for any even $n\geqslant 4$. \hfill $\square$ \end{proof}

\begin{corollary} \label{cor-1}
In the spectrum of the Transposition graph $T_n$ there is the eigenvalue zero for any $n\neq2$.
\end{corollary}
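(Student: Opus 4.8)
The plan is to obtain Corollary~\ref{cor-1} as an immediate consequence of Lemma~\ref{lemma1}, the only real content being to verify that the two partition families produced there cover every value $n \neq 2$. First I would recall the mechanism linking partitions to the spectrum: by Theorem~\ref{Z-88} and the partition--character correspondence, every partition ${\bf i} \vdash n$ yields an eigenvalue $\lambda_{\bf i}$ of $T_n$ through~(\ref{transp_eigen}), and by~(\ref{mul_expr}) its multiplicity is at least $\chi_{\bf i}(I_n)^2 \geqslant 1$. Consequently, exhibiting a single partition of $n$ whose value under~(\ref{transp_eigen}) equals $0$ already certifies that $0 \in Spec(T_n)$. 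This reduces the corollary to a purely combinatorial question about which $n$ admit such a partition.

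Next I would split on the parity of $n$ and invoke Lemma~\ref{lemma1} directly. For odd $n \geqslant 1$ the lemma supplies the partition $\left(\frac{n+1}{2}, 1\times\frac{n-1}{2}\right)$, which is a legitimate partition of $n$ for every odd $n \geqslant 1$ and evaluates to $0$; for even $n \geqslant 4$ it supplies $\left(\frac{n}{2}, 2, 1\times \frac{n-4}{2}\right)$, again valid and evaluating to $0$. The key observation is then a coverage count: the odd family handles every odd $n$, the even family handles every even $n \geqslant 4$, and the only positive integer falling outside both ranges is $n = 2$. Hence $0 \in Spec(T_n)$ for all $n \neq 2$.

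There is essentially no obstacle here beyond careful bookkeeping at the boundary, so the ``hard part'' amounts only to confirming that no value is silently dropped and that $n = 2$ is a genuine exception rather than an artifact of the constructions. For completeness I would verify the latter directly: $T_2$ is a single edge, whose adjacency matrix has spectrum $\{+1, -1\}$, so $0 \notin Spec(T_2)$. This shows the excluded case is sharp and closes the corollary.
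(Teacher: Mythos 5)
Your proposal is correct and follows essentially the same route as the paper: invoke Lemma~\ref{lemma1} to cover all odd $n\geqslant 1$ and all even $n\geqslant 4$, and then verify that $n=2$ is a genuine exception. The only cosmetic difference is in that exceptional case: the paper evaluates~(\ref{transp_eigen}) on the two partitions $(2)$ and $(1,1)$ of $n=2$ (obtaining eigenvalues $1$ and $-1$), whereas you observe directly that $T_2$ is a single edge with adjacency spectrum $\{+1,-1\}$ --- the two checks are equivalent.
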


\begin{proof} For $n=2$ there are only two partitions: $(2)$ and $(1,1)$. Substituting these partitions into the expression~(\ref{transp_eigen}) we have:
$$\lambda_{(2)}=\frac{1}{2} \cdot 2\cdot(2-2+1)=1 \neq 0,$$
and
$$\lambda_{(1,1)}=\frac{1}{2}\cdot 1 \cdot (1-2+1)+\frac{1}{2} \cdot 1 \cdot (1-4+1)=-1 \neq 0.$$
Thus, zero is not the eigenvalue of $T_n$ when $n=2$. However, by Lemma~\ref{lemma1} for any $n\geqslant 3$, we have $0\in Spec(T_n)$.  \hfill $\square$
\end{proof}\\

Similar result is obtained for the eigenvalue one. 


\begin{lemma}
For any odd $n\geqslant 7$, the partition $\left(\frac{n-1}{2},3,1\times \frac{n-5}{2}\right)$ corresponds to the eigenvalue one of the Transposition graph $T_n$. For any even $n \geqslant 14$, the partition $\left(\frac{n-6}{2},4,4,2,1\times \frac{n-14}{2}\right)$ corresponds to the eigenvalue one of $T_n$.
\end{lemma}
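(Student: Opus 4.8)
The plan is to mirror the proof of Lemma~\ref{lemma1}: substitute each candidate partition directly into the eigenvalue formula~(\ref{transp_eigen}) and verify that the result equals $1$. Since both partitions end in a long tail of $1$'s, the computation splits into a handful of explicit leading terms plus an arithmetic sum over the trailing boxes, so the only real work is bookkeeping the indices of the parts and evaluating that sum in closed form. I would first record, in each case, the position $j$ of every part and check that the parts indeed sum to $n$, since the formula weights the part $n_j$ by the factor $(n_j - 2j + 1)$ and a misaligned index would spoil the whole calculation.

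For the odd case, the part $\frac{n-1}{2}$ sits at $j=1$, the part $3$ at $j=2$, and the $\frac{n-5}{2}$ ones occupy positions $j=3,\dots,\frac{n-1}{2}$. The $j=2$ term vanishes because $3-2\cdot 2+1=0$, so only the $j=1$ term and the tail contribute. A part of size $1$ at position $j$ contributes $\tfrac12(1-2j+1)=1-j$, so the tail contributes $\sum_{j=3}^{(n-1)/2}(1-j)$, which I would put in closed form via the standard sum of consecutive integers. Adding it to the $j=1$ term $\frac{(n-1)(n-3)}{8}$, the quadratic-in-$n$ pieces cancel and what remains is exactly $1$.

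For the even case the structure is identical but with three nontrivial leading parts: $\frac{n-6}{2}$ at $j=1$, then $4,4,2$ at $j=2,3,4$, then $\frac{n-14}{2}$ ones at $j=5,\dots,\frac{n-6}{2}$. Here the leading boxes beyond $j=1$ contribute $2-2-5=-5$, the $j=1$ box contributes $\frac{(n-6)(n-8)}{8}$, and the tail contributes $\sum_{j=5}^{(n-6)/2}(1-j)$; once the sum is written in closed form, the $n^2$ and $n$ terms cancel against $\frac{(n-6)(n-8)}{8}$ and the constants collapse to $6-5=1$. I expect no genuine obstacle, since this is a verification rather than an existence argument: correctness rests entirely on the part-to-index alignment and on confirming that the stated lower bounds are exactly the thresholds making each partition legitimate. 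I would therefore close by noting that $\left(\frac{n-1}{2},3,1\times\frac{n-5}{2}\right)$ is a valid nonincreasing partition precisely when $\frac{n-1}{2}\geqslant 3$, i.e.\ $n\geqslant 7$ (for $n=5$ the first part drops below $3$), and that $\left(\frac{n-6}{2},4,4,2,1\times\frac{n-14}{2}\right)$ is valid precisely when $\frac{n-6}{2}\geqslant 4$ and the tail length $\frac{n-14}{2}$ is nonnegative, i.e.\ $n\geqslant 14$.
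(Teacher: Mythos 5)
Your proposal is correct and follows essentially the same route as the paper's own proof: direct substitution of each partition into the eigenvalue formula~(\ref{transp_eigen}), with the contribution split into the explicit leading parts plus the closed-form sum over the tail of $1$'s (indeed, the paper's tail sums $-\frac{(n+1)(n-5)}{8}$ and $-\frac{n(n-14)}{8}$ match yours exactly, as do the leading terms $\frac{(n-1)(n-3)}{8}$, $0$ and $\frac{(n-6)(n-8)}{8}$, $2-2-5$). Your closing check of the validity thresholds $n\geqslant 7$ and $n\geqslant 14$ is the same observation the paper makes, stated slightly more explicitly.
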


\begin{proof} We prove the lemma by a direct substitution of partitions into the expression~(\ref{transp_eigen}) such that if $n$ is odd then we have:
$$\lambda_{\left(\frac{n-1}{2},3,1\times \frac{n-5}{2}\right)}=\frac{1}{2}\cdot\frac{n-1}{2}\cdot\left(\frac{n-1}{2}-2+1\right)+ \frac{1}{2}\cdot3\cdot(3-4+1)+$$
$$\frac{1}{2}\cdot\sum\limits_{j=3}^{\frac{n-5}{2}+2}(1-2j+1)=\frac{(n-1)\cdot(n-3)}{8}+0-\frac{(n+1)\cdot(n-5)}{8}=\frac{8}{8}=1,$$
and if $n$ is even then we have:
$$\lambda_{\left(\frac{n-6}{2},4,4,2,1\times \frac{n-14}{2}\right)}=\frac{1}{2}\cdot\frac{n-6}{2}\cdot(\frac{n-6}{2}-2+1)+\frac{1}{2}\cdot4\cdot(4-4+1)+\frac{1}{2}\cdot4\cdot(4-6+1)+$$
$$+\frac{1}{2}\cdot2\cdot(2-8+1)+\frac{1}{2}\cdot\sum\limits_{j=5}^{\frac{n-14}{2}+4}(1-2j+1)=\frac{(n-6)\cdot(n-8)}{8}-5-\frac{n\cdot(n-14)}{8}=\frac{8}{8}=1.$$ 

Note that the partition $\left(\frac{n-1}{2},3,1\times \frac{n-5}{2}\right)$ holds for any odd $n\geqslant 7$, and the partition  $\left(\frac{n-6}{2},4,4,2,1\times \frac{n-14}{2}\right)$ holds for any even $n\geqslant 14$. \hfill $\square$
\end{proof}\\

The following two technical lemmas are used in Section~\ref{Sec3} to prove Theorem~\ref{10}. 

\begin{lemma}\label{lemma3}
If $n \geqslant 7$ is odd, the partition $(\frac{n-2\lambda+1}{2}, \lambda+2, 2\times (\lambda-1), 1\times \frac{n-4\lambda-1}{2})$ corresponds to the eigenvalue $\lambda \in \mathbb{N}$, where $1\leqslant \lambda \leqslant \frac{n-3}{4}$.
\end{lemma}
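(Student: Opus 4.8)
The plan is to mimic the direct-substitution strategy already used for the previous two lemmas: I would plug the given partition into the eigenvalue formula~(\ref{transp_eigen}) and verify that, after telescoping the $n$-dependent contributions, exactly $\lambda$ remains. Before substituting, I would first confirm that the proposed sequence is a genuine partition of $n$ inside the stated range. Reading off the positions, the first part $\frac{n-2\lambda+1}{2}$ sits at $j=1$, the part $\lambda+2$ at $j=2$, the $\lambda-1$ copies of $2$ occupy $j=3,\dots,\lambda+1$, and the $\frac{n-4\lambda-1}{2}$ copies of $1$ occupy $j=\lambda+2,\dots,\frac{n-2\lambda+1}{2}$. Since $n$ is odd, both $\frac{n-2\lambda+1}{2}$ and $\frac{n-4\lambda-1}{2}$ are integers, and the parts sum to $(n-3\lambda)+3\lambda=n$. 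The nonincreasing requirement reduces to the single inequality $\frac{n-2\lambda+1}{2}\geqslant\lambda+2$, which is equivalent to $\lambda\leqslant\frac{n-3}{4}$ --- precisely the stated upper bound --- while $\frac{n-4\lambda-1}{2}\geqslant 0$ then holds automatically.

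With the partition justified, I would split the sum in~(\ref{transp_eigen}) into its four blocks. The first part contributes $\frac{(n-2\lambda)^2-1}{8}$; the part $\lambda+2$ contributes $\frac{(\lambda+2)(\lambda-1)}{2}$; the block of $2$'s contributes the arithmetic sum $\sum_{j=3}^{\lambda+1}(3-2j)=1-\lambda^2$; and the trailing block of $1$'s contributes $\sum_{j=\lambda+2}^{(n-2\lambda+1)/2}(1-j)=-\frac{(n-4\lambda-1)(n+1)}{8}$. The decisive point is that the two $n$-heavy terms combine cleanly, namely $\frac{(n-2\lambda)^2-1}{8}-\frac{(n-4\lambda-1)(n+1)}{8}=\frac{\lambda(\lambda+1)}{2}$, after which the remaining purely-$\lambda$ terms collapse to $\lambda$.

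The only genuine care required is the bookkeeping of the index ranges $j$ in the two arithmetic sums --- in particular correctly pinning down that the $1$'s terminate at position $\frac{n-2\lambda+1}{2}$ --- together with tracking the mutual cancellation of the $n^2$ and $n$ terms between the first and last blocks; everything else is routine algebra with no conceptual obstacle. As a useful consistency check, setting $\lambda=1$ empties the block of $2$'s and recovers the partition $\left(\frac{n-1}{2},3,1\times\frac{n-5}{2}\right)$ already shown to yield the eigenvalue one.
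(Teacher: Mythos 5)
Your proposal is correct and takes essentially the same route as the paper: direct substitution of the partition into~(\ref{transp_eigen}), block-by-block evaluation of the two arithmetic sums, and the observation that the nonincreasing condition $\frac{n-2\lambda+1}{2}\geqslant\lambda+2$ is precisely $\lambda\leqslant\frac{n-3}{4}$, with $n\geqslant 7$ forced by $\lambda\geqslant 1$. As a minor bonus, your value $\frac{(n-2\lambda)^2-1}{8}=\frac{(n-2\lambda+1)(n-2\lambda-1)}{8}$ for the first block is the correct one, whereas the paper's intermediate expression $\frac{1}{8}(n-2\lambda+1)(n-2\lambda-2)$ contains a typo in its last factor (the final total $\lambda$ is nonetheless right).
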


\begin{proof} By a direct substitution of the partition into the expression~(\ref{transp_eigen}) we immediately have:
$$\lambda_{\left(\frac{n-2\lambda+1}{2}, \lambda+2, 2\times (\lambda-1), 1\times \frac{n-4\lambda-1}{2}\right)}=$$ 
$$=\underbrace{\frac{1}{2}\left(\frac{n-2\lambda+1}{2}\right)\left(\frac{n-2\lambda+1}{2}-2+1\right)}_{(1)}+\underbrace{\frac{1}{2}(\lambda+2)(\lambda+2-2\cdot2+1)}_{(2)}+$$
$$+\underbrace{\frac{1}{2}\sum\limits_{j=3}^{\lambda+1}2(2-2j+1)}_{(3)}+\underbrace{\frac{1}{2}\sum\limits_{j=\lambda+2}^{\lambda+2+\frac{n-4\lambda-1}{2}-1}1(1-2j+1)}_{(4)},$$
where after calculations we have:
\begin{enumerate}[(1)]
    \item=$\frac{1}{8}(n-2\lambda+1)(n-2\lambda-2);$
    \item=$\frac{1}{2}(\lambda+2)(\lambda-1);$
    \item=$\sum\limits_{j=3}^{\lambda+1}2-2j+1=\left(\frac{3-6+3-2(\lambda+1)}{2}\right)(\lambda-1)=-(\lambda^2-1);$
    \item=$\sum\limits_{j=\lambda+2}^{\lambda+\frac{n-4\lambda-1}{2}+1}(1-j)=\left(\frac{1-\lambda-2+1-(\lambda+1+\frac{n-4\lambda-1}{2})}{2}\right)\frac{n-4\lambda-1}{2}=-\frac{(n+1)(n-4\lambda-1)}{8}.$
\end{enumerate}

Finally, putting all the members of the expression together we have:

$$\frac{1}{8}(n-2\lambda+1)(n-2\lambda-2)+\frac{1}{2}(\lambda+2)(\lambda-1)-(\lambda^2-1)-\frac{(n+1)(n-4\lambda-1)}{8}=\lambda.$$

It is easy to see that $\left(\frac{n-2\lambda+1}{2},\lambda+2,2\times(\lambda-1),1\times\frac{n-4\lambda-1}{2}\right)$ is a partition if and only if $\frac{n-2\lambda+1}{2}\geqslant \lambda+2$ and $\lambda\geqslant 1$. Therefore, if $\lambda \leqslant \frac{n-3}{4}$ then $\lambda \in Spec(T_n)$. Since $\lambda \geqslant 1$, this implies $\frac{n-3}{4} \geqslant 1$. Thus, $n \geqslant 7$ which completes the proof. \hfill $\square$
\end{proof}\\

\begin{lemma}\label{lemma4}
If $n\geqslant 14$ is even, the partition $(\frac{n-6\lambda}{2},2\lambda+2,\lambda+3,3\times (\lambda-1),2\times\lambda,1\times \frac{n-10\lambda-4}{2})$ corresponds to the eigenvalue $\lambda\in \mathbb{N}$, where $1\leqslant \lambda \leqslant \frac{n-4}{10}$.
\end{lemma}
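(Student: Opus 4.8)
The plan is to mirror the proof of Lemma~\ref{lemma3}: substitute the given partition directly into the eigenvalue expression~(\ref{transp_eigen}) and verify that the contributions of its six blocks of parts sum to exactly $\lambda$. First I would pin down the index range occupied by each block. Reading $(\frac{n-6\lambda}{2},2\lambda+2,\lambda+3,3\times(\lambda-1),2\times\lambda,1\times\frac{n-10\lambda-4}{2})$ from left to right, the part $\frac{n-6\lambda}{2}$ sits at $j=1$, the part $2\lambda+2$ at $j=2$, the part $\lambda+3$ at $j=3$, the $\lambda-1$ copies of $3$ at $j=4,\dots,\lambda+2$, the $\lambda$ copies of $2$ at $j=\lambda+3,\dots,2\lambda+2$, and the $\frac{n-10\lambda-4}{2}$ copies of $1$ at $j=2\lambda+3,\dots,2\lambda+2+\frac{n-10\lambda-4}{2}$. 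A preliminary check that the parts add up to $n$ (and that the part-count is consistent with these ranges) guards against an indexing slip before any arithmetic is done.

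Next I would evaluate the six contributions to $\sum_j \frac{n_j(n_j-2j+1)}{2}$. The first three come from single parts and give $\frac{1}{2}\cdot\frac{n-6\lambda}{2}\bigl(\frac{n-6\lambda}{2}-1\bigr)$, $\frac{1}{2}(2\lambda+2)(2\lambda-1)$, and $\frac{1}{2}(\lambda+3)(\lambda-2)$. The remaining three are arithmetic sums in $j$, namely $\frac{1}{2}\sum_{j=4}^{\lambda+2}3(4-2j)$, $\frac{1}{2}\sum_{j=\lambda+3}^{2\lambda+2}2(3-2j)$, and $\frac{1}{2}\sum_{j=2\lambda+3}^{2\lambda+2+(n-10\lambda-4)/2}(2-2j)$, each of which I would collapse by the average-endpoint-times-length rule already used in Lemma~\ref{lemma3}. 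The key mechanism is that all of the $n$-dependence (the $n^2$, $n\lambda$, and $n$ terms) cancels between the $j=1$ block and the block of $1$'s, leaving a pure polynomial in $\lambda$; the other four blocks are already functions of $\lambda$ alone, and their quadratic terms cancel against it, so the residue is exactly $\lambda$.

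Finally I would confirm that the expression is a genuine partition and extract the admissible range. Since $n$ is even, $\frac{n-6\lambda}{2}$ and $\frac{n-10\lambda-4}{2}$ are integers; non-negativity of the latter and the monotonicity requirement $\frac{n-6\lambda}{2}\geqslant 2\lambda+2$ both reduce to $n\geqslant 10\lambda+4$, i.e. $\lambda\leqslant\frac{n-4}{10}$, which is the stated upper bound. The internal inequalities $2\lambda+2\geqslant\lambda+3\geqslant 3\geqslant 2\geqslant 1$ hold precisely for $\lambda\geqslant 1$, which simultaneously guarantees that the blocks $3\times(\lambda-1)$ and $2\times\lambda$ are well defined (for $\lambda=1$ the block of $3$'s is empty and the partition degenerates to $(\frac{n-6}{2},4,4,2,1\times\frac{n-14}{2})$, recovering the even case for the eigenvalue one). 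Combining $\lambda\geqslant 1$ with $\lambda\leqslant\frac{n-4}{10}$ forces $n\geqslant 14$, exactly the hypothesis.

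I expect the only real obstacle to be the bookkeeping. With six distinct blocks, an off-by-one error in any index range propagates to every later block, and the three arithmetic sums must be evaluated and combined without sign slips so that the $n$-terms cancel cleanly. Conceptually the lemma is identical to Lemma~\ref{lemma3}; the difficulty is purely computational, namely keeping the algebra disciplined enough that the $O(n^2)$ and $O(n\lambda)$ contributions vanish and the final polynomial in $\lambda$ collapses to $\lambda$.
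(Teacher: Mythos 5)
Your proposal is correct and follows essentially the same route as the paper: direct substitution of the partition into expression~(\ref{transp_eigen}), evaluation of the six blocks over exactly the index ranges the paper uses, cancellation of the $n$-dependent terms between the first block and the block of $1$'s, and the same reduction of the partition conditions $\frac{n-6\lambda}{2}\geqslant 2\lambda+2$, $\lambda\geqslant 1$ to $\lambda\leqslant\frac{n-4}{10}$ and $n\geqslant 14$. Your extra checks (parts summing to $n$, and the $\lambda=1$ degeneration recovering the even-case partition for eigenvalue one) are correct refinements, not deviations.
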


\begin{proof} Using the same arguments as in the proof of Lemma~\ref{lemma3}, we substitute the partition into the expression~(\ref{transp_eigen}) and have:

$$\lambda_{\left(\frac{n-6\lambda}{2},2\lambda+2,\lambda+3,3\times(\lambda-1),2\times\lambda,1\times\frac{n-10\lambda-4}{2}\right)}=$$
$$=\underbrace{\frac{1}{2}\left(\frac{n-6\lambda}{2}\right)\left(\frac{n-6\lambda}{2}-2+1\right)}_{(1)} + \underbrace{\frac{1}{2}(2\lambda+2)(2\lambda+2-4+1)}_{(2)}+$$
$$+\underbrace{\frac{1}{2}(\lambda+3)(\lambda+3-6+1)}_{(3)} +
 \underbrace{\frac{1}{2}\sum\limits_{j=4}^{4+\lambda-2}3(3-2j+1)}_{(4)}+$$ 
$$+\underbrace{\frac{1}{2}\sum\limits_{j=\lambda+3}^{2\lambda+3-1}2(2-2j+1)}_{(5)} + 
 \underbrace{\frac{1}{2}\sum\limits_{j=2\lambda+3}^{2\lambda+3+\frac{n-10\lambda-4}{2}-1}1(1-2j+ 1)}_{(6)},$$
and after calculations we obtain:
\begin{enumerate}[(1)]
    \item=$\frac{1}{2}(\frac{n-6\lambda}{2})(\frac{n-6\lambda}{2}-2+1)=\frac{1}{8}(n-6\lambda)(n-6\lambda-2);$
    \item=$\frac{1}{2}(2\lambda+2)(2\lambda+2-4+1)=(\lambda+1)(2\lambda-1);$
    \item=$\frac{1}{2}(\lambda+3)(\lambda+3-6+1)=\frac{1}{2}(\lambda+3)(\lambda-2);$
    \item=$\frac{1}{2}\sum\limits_{j=4}^{4+\lambda-2}3(3-2j+1)=-\frac{1}{2}(3\lambda+6)(\lambda-1);$
    \item=$\frac{1}{2}\sum\limits_{j=\lambda+3}^{2\lambda+3-1}2(2-2j+1)=-(3\lambda+2)\lambda;$
    \item=$\frac{1}{2}\sum\limits_{j=2\lambda+3}^{2\lambda+3+\frac{n-10\lambda-4}{2}-1}1\cdot(1-2j+1)=-\frac{(n+2-2\lambda)(n-10\lambda-4)}{8},$
 \end{enumerate}   
for which a summation gives $\lambda$. 

Note that the expression $\left(\frac{n-6\lambda}{2},2\lambda+2,\lambda+3,3\times(\lambda-1),2\times\lambda,1\times\frac{n-10\lambda-4}{2}\right)$ is a partition if and only if $\frac{n-6\lambda}{2}\geqslant 2\lambda+2$ and $\lambda\geqslant 1$. Therefore, if $\lambda \leqslant  \frac{n-4}{10}$ then $\lambda \in Spec(T_n)$. Since $\lambda \geqslant 1$, this implies $\frac{n-4}{10} \geqslant 1$ which gives $n \geqslant 14$ and completes the proof. \hfill $\square$
\end{proof}

\section{Proof of Theorem~\ref{10}}\label{Sec3}

Let us choose $n_0$ such that 
\begin{equation}\label{n0}
    {\mathrm{min}}\left(\frac{n_0-3}{4},\frac{n_0-4}{10}\right)=k.
\end{equation}
If $k\geqslant 0$ then $n_0-3\geqslant 0$ and $n_0-4\geqslant 0$, hence~(\ref{n0}) is equivalent to $\frac{n_0-4}{10}=k$. Therefore, \begin{equation}\label{n01}
     n_0 = 10k + 4.
 \end{equation}

Now we prove that for any $n\geqslant n_0$ and for any $m\in\{0,\dots,k\}$, $m \in Spec(T_n)$. 

Since $n \geqslant n_0 \geqslant 4$ then by Lemma~\ref{lemma1} the eigenvalue zero is in the spectrum. 

If $n$ is odd then by Lemma~\ref{lemma3}, for any $1\leqslant m \leqslant \frac{n-3}{4}$, we have $m \in Spec(T_n)$. It follows from~(\ref{n01}) that $\frac{n-3}{4} \geqslant \frac{n_0-3}{4} \geqslant \frac{10k+4-3}{4}>m$ for any $m\in\{1,\dots,k\}$. Therefore, $m\in Spec(T_n)$ for any $m\in \{0,\dots,k\}$. 

If $n$ is even then by Lemma~\ref{lemma4}, for any $1\leqslant m \leqslant \frac{n-4}{10}$, we have $m \in Spec(T_n)$. It follows from~(\ref{n01}) that $\frac{n-4}{10}\geqslant \frac{n_0-4}{10} \geqslant \frac{10k+4-4}{10} \geqslant m$ for any $m\in \{1,\dots,k\}$. Again, we have that $m \in Spec(T_n)$ for any $m \in \{0, \dots, k\}$.

Thus, for any $n\geqslant n_0=10k+4$ and for any $m\in\{0,\dots,k\}$, $m\in Spec(T_n)$ which completes the proof of Theorem~\ref{10} 
\hfill $\square$

\section{The third and the fourth largest eigenvalues}\label{Sec4}

In this section we present exact values of the third and the fourth largest eigenvalues of the Transposition graph $T_n, n\geqslant 4,$ and their multiplicities.  

\begin{theorem} \label{3} The third largest eigenvalue of the Transposition graph  $T_n, n\geqslant 4,$ is $\frac{(n-1)(n-4)}{2}$ with multiplicity $\left(\frac{n(n-3)}{2}\right)^2$.
\end{theorem}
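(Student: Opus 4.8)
The plan is to carry out everything through the eigenvalue formula~(\ref{transp_eigen}): to each partition $\lambda=(\lambda_1,\dots,\lambda_k)\vdash n$ it assigns the value $C(\lambda)=\sum_{j=1}^{k}\tfrac{\lambda_j(\lambda_j-2j+1)}{2}$, which is precisely the content sum $\sum_{(i,j)\in\lambda}(j-i)$ of the Young diagram, since $\tfrac{\lambda_j(\lambda_j-2j+1)}{2}=\sum_{c=1}^{\lambda_j}(c-j)$. A short rearrangement rewrites this as $C(\lambda)=\sum_{i}\binom{\lambda_i}{2}-\sum_{j}\binom{\lambda'_j}{2}$, where $\lambda'$ is the conjugate partition. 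The target is to show that among all $p(n)$ partitions exactly $(n)$, $(n-1,1)$ and $(n-2,2)$ produce the three largest values of $C$, with $(n-2,2)$ the unique partition giving the third largest one (note $(n-2,2)$ is a genuine partition precisely for $n\geqslant 4$).

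The crux is that $C$ is \emph{strictly increasing} along the dominance order on partitions of $n$. I would read this off the displayed identity: $\lambda\mapsto\sum_i\binom{\lambda_i}{2}$ is strictly Schur-convex, hence strictly increasing in dominance, while $\lambda\mapsto\sum_j\binom{\lambda'_j}{2}$ is the same function applied to the conjugate and so is strictly decreasing in dominance (conjugation reverses the order); their difference $C$ is therefore strictly increasing. Equivalently, moving a single box up from the end of row $b$ to the end of row $a$ with $a<b$ changes the content by $(\lambda_a-\lambda_b)+(b-a)+1>0$, so every elementary down-step in dominance strictly lowers $C$.

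With strict monotonicity in hand, the remaining input is purely order-theoretic. Checking the dominance inequalities shows that a partition $\nu\vdash n$ satisfies $\nu\trianglelefteq(n-2,2)$ if and only if $\nu_1\leqslant n-2$, the only binding condition being on the first partial sum (all later partial sums are automatically $\leqslant n$). Since the only partitions with $\nu_1>n-2$ are $(n)$ and $(n-1,1)$—which by Theorem~\ref{KY-08} carry the two largest eigenvalues $\tfrac{n(n-1)}{2}$ and $\tfrac{n(n-3)}{2}$—every other partition is strictly dominated by $(n-2,2)$ and hence has strictly smaller $C$. This simultaneously identifies $C(n-2,2)$ as the third largest eigenvalue and $(n-2,2)$ as the \emph{unique} partition attaining it. Two routine computations then finish the proof: substituting $(n-2,2)$ into~(\ref{transp_eigen}) gives $C(n-2,2)=\binom{n-2}{2}-1=\tfrac{(n-1)(n-4)}{2}$, with $\tfrac{n(n-3)}{2}-\tfrac{(n-1)(n-4)}{2}=n-2>0$ confirming the strict gap below the second eigenvalue; and because the partition is unique, formula~(\ref{mul_expr}) reduces the multiplicity to $\chi^{(n-2,2)}(I_n)^2$, where the hook-length formula~(\ref{hook_formula}) applied to $(n-2,2)$, whose hooks are $n-1,n-2,n-4,n-5,\dots,1$ across the first row and $2,1$ in the second, yields $\chi^{(n-2,2)}(I_n)=\tfrac{n!}{2(n-1)(n-2)(n-4)!}=\tfrac{n(n-3)}{2}$, hence the multiplicity $\bigl(\tfrac{n(n-3)}{2}\bigr)^2$.

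The main obstacle is exactly the strict monotonicity of $C$ in the dominance order together with the clean characterization $\nu\trianglelefteq(n-2,2)\iff\nu_1\leqslant n-2$: these are what upgrade the one-line evaluation of $C(n-2,2)$ into a proof that it is genuinely the third largest value and, crucially, that no second partition contributes to its multiplicity. A more pedestrian alternative avoiding dominance theory would bound $C(\lambda)$ via~(\ref{eigen_ineq}) and track the decrease of $C$ as boxes leave the first row, showing $C(\lambda)\leqslant\tfrac{(n-3)(n-4)}{2}$ once $\lambda_1\leqslant n-3$; but that route is messier and does not deliver the uniqueness required for the multiplicity as directly.
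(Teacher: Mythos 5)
Your proof is correct, but it takes a genuinely different route from the paper's. The paper argues by direct algebraic comparison, in two cases: for two-row partitions it establishes the telescoping chain $(n-k,k)>(n-k-1,k+1)$ (valid when $n>2k+1$), so that $(n-2,2)$ beats every $(n-k,k)$ with $k\geqslant 3$; for partitions with $k\geqslant 3$ parts it uses the bound~(\ref{eigen_ineq}) together with $n_k\leqslant n/3$, which reduces after simplification to $(n-3)^2>0$. You instead identify the eigenvalue~(\ref{transp_eigen}) as the content sum of the Young diagram, prove that this quantity is strictly increasing in the dominance order (either via strict Schur-convexity of $\sum_i\binom{\lambda_i}{2}$ combined with order reversal under conjugation, or via the single-box-move computation $(\lambda_a-\lambda_b)+(b-a)+1>0$), and then observe that $\nu\trianglelefteq(n-2,2)$ if and only if $\nu_1\leqslant n-2$, so every partition other than $(n)$ and $(n-1,1)$ is strictly dominated by $(n-2,2)$. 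Both routes are complete and both deliver the uniqueness of the attaining partition that the multiplicity computation via~(\ref{mul_expr}) and~(\ref{hook_formula}) requires; your hook-length and content evaluations agree with the paper's. What your approach buys: it is structurally cleaner, it yields uniqueness and the ordering $\lambda_{(n)}>\lambda_{(n-1,1)}>\lambda_{(n-2,2)}$ in one stroke (the top of the dominance order is a chain, so the appeal to Theorem~\ref{KY-08} is even dispensable), and it explains conceptually why the top of the spectrum tracks the top of the dominance order. What it costs: it imports standard but external facts from majorization theory (strict Schur-convexity, conjugation reversing dominance, connectedness of dominance by box moves), whereas the paper's argument is self-contained, using only the formulas already displayed. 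Note also that the clean success of your method is special to the third eigenvalue: immediately below $(n-2,2)$ the dominance order branches into the incomparable pair $(n-3,3)$ and $(n-2,1,1)$, which is precisely why the paper's Theorem~\ref{4} on the fourth largest eigenvalue requires genuine additional computation rather than a pure dominance argument.
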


\begin{proof} We say that a partition ${\bf i_1}=(n_1,n_2,\dots,n_k)\vdash n$ is greater than a partition ${\bf i_2}=(m_1,m_2,\dots,m_l) \vdash n$, and write ${\bf i_1}>{\bf i_2}$, if an eigenvalue $\lambda_{\bf i_1}$ corresponding to $\bf i_1$ is greater than an eigenvalue $\lambda_{\bf i_2}$ corresponding to $\bf i_2$.  

By Theorem~\ref{KY-08}, the first and the second largest eigenvalues are $\frac{n(n-1)}{2}$ and $\frac{n(n-3)}{2}$, respectively. Moreover, these eigenvalues are associated with partitions $(n)$ and $(n-1,1)$, correspondingly~\cite{KY97}. Obviously, that  $(n)>(n-1,1)>(n-2,2)$. 

Our main goal now is to show that the partition $(n-2,2)$ is greater than any other partitions excepting $(n)$ and $(n-1,1)$, and it is the only partition associated with the third largest eigenvalue of $T_n, n\geqslant 4$. 

To show this, it is sufficient to prove that the following two inequalities hold:

\begin{equation}\label{ineq_part_2}
    (n-2,2)>(n-k,k)
\end{equation}
 for any $k>2$ and $k\leqslant \frac{n}{2}$, and

\begin{equation}\label{ineq_part_k}
    (n-2,2)>(n_1,\dots,n_k) \vdash n
\end{equation}
for any $k \geqslant 3$.

To prove~(\ref{ineq_part_2}), let us consider partitions $(n-k,k) \vdash n$ and  $(n-k-1,k+1) \vdash n$. Then, the following inequality  
$$(n-k,k)>(n-k-1,k+1)$$
holds if $n>2k+1$. Indeed, by~(\ref{transp_eigen}) we have to consider the inequality $(n-k)(n-k-2+1)+k(k-4+1)>(n-k-1)(n-k-1-2+1)+(k+1)(k+1-4+1),$ which gives $n>2k+1$ after reductions. Moreover, the condition $(n-k-1,k+1) \vdash n$ implies that $n-k-1 \geqslant k+1$ which again gives us $n\geqslant 2k+2>2k+1$.

Now let us show that inequality~(\ref{ineq_part_k}) holds for any $k \geqslant 3$. By~(\ref{transp_eigen}), we have the following expression for the eigenvalue corresponding to the partition $(n-2,2)$:
$$\lambda_{(n-2,2)}=\frac{(n-2)(n-2-2+1)}{2}+\frac{2(2-2\cdot 2+1)}{2}=\frac{(n-1)(n-4)}{2}.$$

Since $(n_1,\dots,n_k)\vdash n$ and $k\geqslant 3$, then $n_k \leqslant \frac{n}{3}$. Therefore, using the inequality (\ref{eigen_ineq}) leads to the following expression:
$$\lambda_{(n_1,\dots, n_k)} \leqslant \frac{1}{2}\left(\left(n-\frac{n}{3}\right)\left(n-\frac{n}{3}+1\right)+\frac{n}{3}\left(\frac{n}{3}-2\cdot 3+1\right)\right),$$
and finally we have:
$$\frac{(n-1)(n-4)}{2} > \frac{1}{2}\left(\left(n-\frac{n}{3}\right)\left(n-\frac{n}{3}+1\right)+\frac{n}{3}\left(\frac{n}{3}-2\cdot 3+1\right)\right),$$
which after reductions gives $(n-3)^2> 0$ holding for any $n \geqslant 4$.

Hence, taking into account Theorem~\ref{KY-08} and the inequalities~(\ref{ineq_part_2}),~(\ref{ineq_part_k}), for any $n \geqslant 4$, we have:
$$\lambda_{(n)}>\lambda_{(n-1,1)}>\lambda_{(n-2,2)}>\lambda_{\bf i},$$
where ${\bf i} \in \{{\bf i_j}=(n_1,\dots,n_k)\vdash n \ | \  {\bf i_j} \notin \{(n),(n-1,1),(n-2,2)\}\}$.

Thus, it is shown that $\frac{(n-1)(n-4)}{2}$ is the third largest eigenvalue of $T_n$ associated with the partition $(n-2,2)$. Now let us get its multiplicity. 

It is easy to see that the hook-lengths of the partition $(n-2,2)$ are given as $h_{21}=2$, $h_{12}=n-1$, and $h_{1j}=n-j-1$ for any $j\in\{3,\ldots,n-2\}$. 
Hence, by equations~(\ref{mul_expr}) and~(\ref{hook_formula}) we immediately have:
$${\rm mul}\left(\frac{(n-1)(n-4)}{2}\right) = \left(\frac{n!}{2\cdot(n-1)\cdot(n-2)\cdot(n-4)!}\right)^2 = \left(\frac{n(n-3)}{2}\right)^2,$$
which gives the multiplicity of the third largest eigenvalue and complete the proof. \hfill $\square$
\end{proof}


\begin{theorem}\label{4} The fourth largest eigenvalue of the Transposition graph  $T_n, n > 6,$ is $\frac{n(n-5)}{2}$ with multiplicity $\left(\frac{(n-1)(n-2)}{2}\right)^2$.
\end{theorem}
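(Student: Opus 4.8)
The plan is to follow the template of the proof of Theorem~\ref{3}. First I would locate the candidate partition by hand: using~(\ref{transp_eigen}),
$$\lambda_{(n-2,1,1)}=\frac{(n-2)(n-3)}{2}-1-2=\frac{n(n-5)}{2},\qquad \lambda_{(n-3,3)}=\frac{(n-3)(n-4)}{2}.$$
Since $\tfrac{(n-1)(n-4)}{2}-\tfrac{n(n-5)}{2}=2>0$, the third largest eigenvalue $\tfrac{(n-1)(n-4)}{2}$ stays strictly above our candidate. As in Theorem~\ref{3}, the core task is to show that $(n-2,1,1)$ exceeds every partition of $n$ other than $(n),(n-1,1),(n-2,2)$ and is the \emph{only} partition realizing $\tfrac{n(n-5)}{2}$ at that level; the multiplicity will then drop out of~(\ref{mul_expr}) and~(\ref{hook_formula}).

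For the comparison I would split the partitions into those with at most two parts and those with at least three. Among partitions with at most two parts, the monotonicity $(n-k,k)>(n-k-1,k+1)$ established inside the proof of Theorem~\ref{3} shows that, after removing $(n),(n-1,1),(n-2,2)$, the largest one is $(n-3,3)$; it therefore suffices to compare $(n-2,1,1)$ with $(n-3,3)$. Because $\tfrac{n(n-5)}{2}-\tfrac{(n-3)(n-4)}{2}=n-6$, this single comparison is exactly where the hypothesis $n>6$ is forced (at $n=6$ the partitions $(4,1,1)$ and $(3,3)$ tie).

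The main obstacle is the case of at least three parts, because the bound~(\ref{eigen_ineq}) is too weak here: it peels off the \emph{last} part, and for a partition whose last part equals $1$ it only returns something close to $\binom{n}{2}$. Instead I would peel off the \emph{first row}. A direct rearrangement of~(\ref{transp_eigen}) gives, for $\mu=(n_1,\dots,n_k)$ and $\mu'=(n_2,\dots,n_k)\vdash m$ with $m=n-n_1$,
$$\lambda_\mu=\binom{n_1}{2}+\lambda_{\mu'}-m.$$
When $k\geqslant 3$ the sub-partition $\mu'$ has at least two parts, so Theorem~\ref{KY-08} applied to $\mathrm{Sym}_m$ gives $\lambda_{\mu'}\leqslant\frac{m(m-3)}{2}$, the value at $(m-1,1)$. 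Substituting,
$$\lambda_\mu\leqslant\frac{(n-m)(n-m-1)}{2}+\frac{m(m-3)}{2}-m=\frac{n^2-2nm+2m^2-n-4m}{2}=:g(m).$$
Here $g$ is a convex quadratic in $m$, and for three-or-more-part partitions $m$ ranges over $\{2,\dots,n-1\}$, so the maximum is at an endpoint. Since $g(2)=\frac{n(n-5)}{2}$, $g(n-1)=\frac{(n-1)(n-6)}{2}$, and $g(2)-g(n-1)=n-3>0$, the maximum is $g(2)$, and strict convexity forces $g(m)<g(2)$ for every $m\geqslant 3$. Thus every partition with at least three parts other than the one with $m=2$ has eigenvalue strictly below $\tfrac{n(n-5)}{2}$, and $m=2$ (with $\mu'\vdash 2$ having two parts) leaves only $\mu=(n-2,1,1)$.

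Combining the two cases, once $n>6$ the value $\tfrac{n(n-5)}{2}$ is realized by $(n-2,1,1)$ alone, so by~(\ref{mul_expr}) its multiplicity equals the square of the dimension of this single representation. The hook-lengths of $(n-2,1,1)$ are $n$ in the corner, then $n-3,n-4,\dots,1$ along the remainder of the first row, and $2,1$ down the first column, with product $2\,n\,(n-3)!$; formula~(\ref{hook_formula}) then yields dimension $\frac{n!}{2n(n-3)!}=\frac{(n-1)(n-2)}{2}$, and hence multiplicity $\left(\frac{(n-1)(n-2)}{2}\right)^2$, as claimed.
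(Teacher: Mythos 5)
Your proof is correct, and it reaches the paper's conclusion by a genuinely different route in the crucial comparison step. The paper proves three separate inequalities: for partitions with $k\geqslant 4$ parts it uses the tail bound~(\ref{eigen_ineq}) after substituting $n_k\leqslant \frac{n}{4}$; for three-part partitions it reduces $(n-2,1,1)>(n_1,n_2,n_3)$ to the elementary inequality $n_1n_2+n_3(n_1+n_2)>2n_1+n_2$; and it handles two-part partitions by setting $n_3=0$ in that same inequality. You instead dispose of two-part partitions by reusing the monotonicity $(n-k,k)>(n-k-1,k+1)$ from the proof of Theorem~\ref{3}, so only the single comparison with $(n-3,3)$ is needed, which cleanly localizes where $n>6$ is forced (the tie between $(4,1,1)$ and $(3,3)$ at $n=6$); and you treat all partitions with at least three parts uniformly via the row-peeling identity $\lambda_\mu=\binom{n_1}{2}+\lambda_{\mu'}-m$ (which is correct: moving each part $n_j$, $j\geqslant 2$, from row $j$ to row $j-1$ changes its summand in~(\ref{transp_eigen}) by exactly $+n_j$), combined with the bound $\lambda_{\mu'}\leqslant\frac{m(m-3)}{2}$ from Theorem~\ref{KY-08} applied to $\mathrm{Sym}_m$ (valid because $\mu'\neq(m)$ and the top eigenvalue there has multiplicity one) and convexity of $g(m)$. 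Your route buys two things: uniformity (no $k=3$ versus $k\geqslant 4$ case split), and it repairs a step in the paper that does not hold as written --- the right-hand side of~(\ref{eigen_ineq}) is a \emph{decreasing} function of $n_k$ on the relevant range, so replacing $n_k$ by $\frac{n}{4}$ is not justified, and indeed the paper's displayed bound $\frac{n(5n-20)}{16}$ is violated by $(n-3,1,1,1)$, whose eigenvalue is $\frac{n(n-7)}{2}$, as soon as $n>12$; your remark that~(\ref{eigen_ineq}) is too weak for partitions ending in $1$ is exactly on target. What the paper's approach buys in exchange is self-containment: it only manipulates~(\ref{transp_eigen}) algebraically and never invokes spectral information about smaller Transposition graphs, whereas you need Theorem~\ref{KY-08} for $T_m$, $2\leqslant m\leqslant n-1$. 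The endgame is identical in both: $(n-2,1,1)$ is the unique partition at level $\frac{n(n-5)}{2}$, its hook lengths are $n,n-3,n-4,\dots,1$ and $2,1$, and~(\ref{mul_expr}) with~(\ref{hook_formula}) gives multiplicity $\left(\frac{(n-1)(n-2)}{2}\right)^2$.
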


\begin{proof} Our main goal is to show that the partition $(n-2,1,1)$ is greater than any other partitions excepting $(n),(n-1,1)$ and $(n-2,2)$. Moreover, it is the only partition associated with the fourth largest eigenvalue of $T_n, \ n>6$. To show this, it is sufficient to prove that the following three inequalities hold:
\begin{equation}\label{t4_1ineq}
    (n-2,1, 1)>(n_1, \dots, n_k),  
\end{equation}
for any $k\geqslant 4$ and $(n_1,\dots,n_k)\vdash n$, where $n>6$;
\begin{equation} \label{t4_2ineq}
    (n-2,1,1)>(n_1,n_2,n_3), 
\end{equation}
for any $(n_1,n_2,n_3)\vdash n$ and $(n_1,n_2,n_3)\neq (n-2,1,1)$;
\begin{equation}\label{t4_3ineq}
    (n-2,1,1)>(n_1,n_2), 
\end{equation}
if $n_1 \leqslant n-3$  for any $(n_1,n_2)\vdash n$, where $n>6$.

First, let us show that inequality~(\ref{t4_1ineq}) holds for any $k\geqslant 4$. By~(\ref{transp_eigen}), we have the following expression for the eigenvalue corresponding to the partition $(n-2,1, 1)$:
$$\lambda_{(n-2,1,1)}=\frac{(n-2)(n-2-2+1)}{2}+\frac{1(1-2\cdot 2+1)}{2}+\frac{1(1-2\cdot 3+1)}{2}=\frac{n(n-5)}{2}.$$

Since $(n_1,\dots,n_k)\vdash n$ and $k\geqslant 4$, then $n_k \leqslant \frac{n}{4}$. Therefore, using the inequality (\ref{eigen_ineq}) leads to the following expression:
\begin{align*}
\lambda_{(n_1,\dots,n_k)} \leqslant \sum\limits_{j=1}^k \frac{n_j(n_j - 2j + 1)}{2} \leqslant  \frac{(n-\frac{n}{4})(n-\frac{n}{4}-1)}{2}+\frac{\frac{n}{4}(\frac{n}{4}-2\cdot 4 + 1)}{2}= \\ =\frac{n \cdot (5n-20)}{16}.
\end{align*}
Comparing $\frac{n(n-5)}{2}$ and $\frac{n \cdot (5n-20)}{16}$ gives $3n>20$ which holds for any integer $n>6$.

To prove inequality~(\ref{t4_2ineq}), let us write an expression for the eigenvalue corresponding to the partition $(n_1, n_2, n_3)$:
\begin{equation}\label{t4_2_p}
    \begin{split}
      \lambda_{(n_1,n_2,n_3)}=\frac{n_1\cdot(n_1-2+1)}{2}+\frac{n_2\cdot(n_2-4+1)}{2}+\frac{n_3\cdot(n_3-6+1)}{2} \\
     = \frac{n_1\cdot(n_1-1)}{2}+\frac{n_2\cdot(n_2-3)}{2}+\frac{n_3\cdot(n_3-5)}{2}. 
   \end{split}
\end{equation}
Thus,~(\ref{t4_2ineq}) is equivalent to the following inequality: 
$$n^2-5 n>n_1\cdot(n_1-1)+n_2\cdot(n_2-3)+n_3\cdot(n_3-5)$$ for any $(n_1,n_2,n_3) \vdash n$. Moreover, since $n_3=n-n_1-n_2$, then we have:
$$n^2-5n>n_1\cdot(n_1-1)+n_2\cdot(n_2-3)+(n-n_1-n_2)(n-n_1-n_2-5),$$
which after calculations can be written as follows: 
$$2n_1^2+2n_2^2+4n_1+2n_2-2nn_1-2nn_2+2n_1n_2< 0$$
or as follows: 
$$n_1^2+n_2^2+2n_1+n_2-(nn_1+nn_2-n_1n_2)< 0.$$
If we rewrite the last inequality in the form:
$$n\cdot(n_1+n_2)>n_1^2+n_2^2+2n_1+n_2+n_1n_2,$$
then since $n=n_1+n_2+n_3$ we immediately have the inequality:
\begin{equation}\label{t_4_in2_p}
    n_1n_2+n_3(n_1+n_2)>2n_1+n_2.
\end{equation}

Let us show that the last inequality is true. Indeed, if $n_3>1$ then we have $n_1n_2+n_3(n_1+n_2)>n_1n_2+n_1+n_2,$ and since $n_2>1$ in this case we immediately have  $n_1n_2+n_1+n_2>n_1+n_1+n_2=2n_1+n_2$, which means that~(\ref{t_4_in2_p}) holds. If $n_3=1$, then~(\ref{t_4_in2_p}) is written as $n_1n_2>n_1$, and since $n_1>0$ then $n_2>1$. Thus,~(\ref{t_4_in2_p}) holds for all partitions of the form $(n_1,n_2,1)\vdash n$, where $n_2\geqslant 2$. If $n_2=1$ we have the partition $(n-2,1,1)$, and this completes a verification of~(\ref{t_4_in2_p}).

Now we prove inequality~(\ref{t4_3ineq}). Let us consider the expression~(\ref{transp_eigen}) corresponding to the partition $(n_1,n_2,0)$. It is the same as consider the partition $(n_1,n_2)$. Thus, if we prove inequality~(\ref{t_4_in2_p}) for $(n_1,n_2,0)$, then we show that~(\ref{t4_3ineq}) is true. Indeed, if $n_3=0$ then~(\ref{t_4_in2_p}) is rewritten as $n_1n_2>2n_1+n_2$, or as $n_1(n_2-2)>n_2$. Since $n_1\geqslant n_2$, then it holds for $n_2>3$. If $n_2=3$ we get $(n-3)(3-2)>3$. Hence, it is true for any $n>6$, which means that~(\ref{t4_3ineq}) holds.

Therefore, it has shown that the fourth largest eigenvalue is $\frac{n(n-5)}{2}$ and it is obtained due to the only partition $(n-2,1,1)$ for any $n>6$. Since the hook-lengths of $(n-2,1,1)$ are presented as $2,n,n-3,n-4,\dots,1$, then by~(\ref{mul_expr}) and~(\ref{hook_formula}) we immediately obtain its multiplicity as follows:
$${\rm mul}\bigg(\frac{n(n-5)}{2}\bigg)=\bigg(\frac{n!}{2\cdot n\cdot (n-3)!}\bigg)^2=\bigg(\frac{(n-1)\cdot(n-2)}{2}\bigg)^2,$$
which completes the proof. \hfill $\square$
\end{proof}

\section{Discussions and further research}

Despite we know something about the eigenvalues of the Transposition graphs, not so much is known about their multiplicities. In particular, there are no explicit formulas for multiplicities of the eigenvalues zero and one. Computational results on their multiplicities are presented in Table~1 and Table~2. 

As one can see from the Table~1, a behavior of multiplicities of the eigenvalue zero is quite unpredictable. Say, for $n=9$ its multiplicity is less than for $n=8$. We know that for a given $n$ multiplicities of eigenvalues depend on the number of partitions of $n$, and with growing $n$ the number of the corresponding partitions should grow as well. To understand this growing function for any eigenvalue in the spectrum, or even to find an approach for getting explicit formulas of multiplicities of the eigenvalues from Theorem~\ref{10} is one of the challenging problems. 

\vspace{5mm}

\begin{table}[h!]
\centering
\captionsetup[table]{labelformat=empty}
\begin{tabular}{ |c|c|c|c|c|c|c|c|c|c|c|} 
\hline
$n$ & 1 & 3 & 4 & 5 & 6 & 7 & 8 & 9 & 10 & 11 \\
\hline
${\rm mul(0)}$ & 1 & 4 & 4 & 36 & 256 & 400 & 9864 & 6664 & 790528 & 1474848 \\
\hline
\end{tabular}
\captionof{table}{Table 1: Multiplicities of the eigenvalue zero for any $n\leqslant 11$}
\end{table}


\begin{table}[h!]
\centering
\captionsetup[table]{labelformat=empty}
\begin{tabular}{ |c|c|c|c|c|c|c|} 
\hline
$n$ & 7 & 9 & 11 & 13 & 15 & 17 \\
\hline
${\rm mul(1)}$ & 441 & 46656 & 3052225 & 87609600 & {\small 2701400625} & {\small 3928998225152} \\
\hline
\end{tabular}
\vspace{3mm}

\begin{tabular}{ |c|c|c|c|c|} 
\hline
$n$ & 14 & 16 & 18 & 20\\ 
\hline
${\rm mul(1)}$ & {\small 566130565} & {\small 301532774400} & {\small 274422662958600} & {\small 86181028874240000}\\ 
\hline
\end{tabular}
\captionof{table}{Table 2: Multiplicities of the eigenvalue one for some odd $n\geqslant 7$ and some even $n\geqslant 14$}
\end{table}

\section*{Acknowledgements}
The work of Artem Kravchuk was supported by the Mathematical Center in Akademgorodok, under agreement No. 075-15-2019-1613 with the Ministry of Science and High Education of the Russian Federation.

\end{document}